\newtheorem{thm}{Theorem}
\newtheorem{defin}{Definition}
\newtheorem{rmk}{Remark}
\newtheorem{conj}{Conjecture}
\newtheorem{cor}{Corollary}
\newtheorem{lem}{Lemma}
\newcommand{\li}{\emph{Li}}
\title{A simple inequality relating the Euler-Riemann zeta function, digamma, and cotangent over the unit interval}
\author{Michael Andrew Henry [TU Graz]}
\date{June 23, 2025}
\begin{document}

\maketitle

\begin{abstract}
    We prove an inequality featuring three well-known functions from analysis, namely the cotangent, the Euler-Riemann zeta function, and the digamma function. Aside from a simple proof of our result, we give a conjectured strengthening. We offer various remarks about the origins of this problem.
\end{abstract}

\section{Introduction}

The Riemann $ \zeta $-function is the special function
    \begin{eqnarray*}
        \zeta(s) := \sum^{\infty}_{n=1}\frac{1}{n^s}
    \end{eqnarray*}
where $  \Re(s) > 1 \in \mathbb{C}  $. Naturally, the most famous problem concerning $ \zeta $ is the Riemann hypothesis; however, recently authors have been studying various problems involving the $ \zeta(s) $ when $ s $ is restricted so some subset of $ \mathbb{R} $. As a sample of such articles, see e.g. \cite{alkan2019}, \cite{alzerKwong2025}, \cite{alzerKwong2021}, \cite{alzer2005}, \cite{cerone2007}, \cite{ceroneEtAll2004}, \cite{delange1987},  \cite{hilberdink2023}. We produce our own variety of such a result by giving a new inequality involving $ \zeta $-function restricted to the unit interval. 

Throughout this note, the digamma function is $ \psi(z) = \Gamma'(z)/\Gamma(z) $, where $ \Gamma(z) $ is Euler's generalization of the factorial function. What we prove is the following
    \begin{thm}\label{thm:cotVsZeta&Digamma}
        For $x$ satisfying $ 0 < x < 1 $, the relation 
            \begin{eqnarray*}
                \pi \cot \pi x < \zeta(x) - \psi(x)
            \end{eqnarray*}
        holds. 
    \end{thm}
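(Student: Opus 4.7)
The strategy is to use the digamma reflection formula to reduce the inequality to a cleaner comparison and then subtract off the common boundary pole before bounding the two sides separately.

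First I would apply the reflection formula $\psi(1-x) - \psi(x) = \pi\cot(\pi x)$, rewriting the theorem as the equivalent inequality $\psi(1-x) < \zeta(x)$ on $(0,1)$. Since $\zeta(x)$ and $\psi(1-x)$ share the same singular part $1/(x-1)$ as $x \to 1^{-}$, I would introduce the regular parts
$$\zeta^{*}(x) := \zeta(x) + \frac{1}{1-x}, \qquad \psi^{*}(y) := \psi(y) + \frac{1}{y},$$
both analytic for $\Re(\cdot) > 0$; the target becomes $\psi^{*}(1-x) < \zeta^{*}(x)$.

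The classical partial-fraction series
$$\psi^{*}(y) \;=\; -\gamma + \sum_{n=1}^{\infty}\frac{y}{n(n+y)}$$
displays $\psi^{*}$ as strictly increasing on $(0,\infty)$, with $\psi^{*}(1) = 1 - \gamma$; so $\psi^{*}(1-x) < 1-\gamma$ for every $x \in (0,1)$. For the lower bound on $\zeta^{*}$, I would invoke the Euler--Maclaurin (Abel-summation) identity
$$\zeta^{*}(x) \;=\; \frac{1}{2} + x\int_{1}^{\infty}\frac{\tfrac{1}{2} - \{t\}}{t^{x+1}}\,dt, \qquad \Re x > 0,$$
and prove that the integral is positive. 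Splitting as $\sum_{n \ge 1}\int_{n}^{n+1}$, on each $[n, n+1]$ the numerator $(n + \tfrac{1}{2}) - t$ admits the nonnegative antiderivative $G_{n}(t) = \tfrac{1}{2}(t-n)(n+1-t)$, which vanishes at both endpoints; a single integration by parts then yields
$$\int_{n}^{n+1}\frac{\tfrac{1}{2} - \{t\}}{t^{x+1}}\,dt \;=\; \frac{x+1}{2}\int_{n}^{n+1}\frac{(t-n)(n+1-t)}{t^{x+2}}\,dt \;>\; 0,$$
and summing gives $\zeta^{*}(x) > \tfrac{1}{2}$ throughout $(0,1)$.

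Since $\gamma > \tfrac{1}{2}$ is classical, the chain
$$\psi^{*}(1-x) \;<\; 1 - \gamma \;<\; \tfrac{1}{2} \;<\; \zeta^{*}(x)$$
closes the argument. I expect the only genuinely non-trivial ingredient will be the Euler--Maclaurin plus integration-by-parts step that yields $\zeta^{*} > \tfrac{1}{2}$; the reduction via the reflection formula and the monotonicity of $\psi^{*}$ are essentially bookkeeping.
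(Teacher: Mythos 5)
Your proof is correct, and after the shared opening move (the reflection formula reducing the claim to $\psi(1-x)<\zeta(x)$, followed by adding the common pole $1/(1-x)$ to both sides) it diverges from the paper's argument in an instructive way. The paper bounds $\psi(1-x)+\tfrac{1}{1-x}$ from above by $\log(1-x+e^{-\gamma})$ via the Elezovi\'{c}--Giordano--Pe\v{c}ari\'{c} inequality, inserts the linear function $f(x)=(\gamma-\tfrac{1}{2})x+\tfrac{1}{2}$ between the two regularized functions, and compares $f$ with $\zeta(x)+\tfrac{1}{1-x}$ using monotonicity together with the matching endpoint values $\tfrac{1}{2}$ and $\gamma$. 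You instead separate the two sides by a single constant: the partial-fraction expansion shows $\psi^{*}(1-x)<\psi^{*}(1)=1-\gamma$, and the Euler--Maclaurin integral (the same identity the paper invokes), made rigorously positive by your integration by parts against $G_{n}(t)=\tfrac{1}{2}(t-n)(n+1-t)$, gives $\zeta^{*}(x)>\tfrac{1}{2}$; since $1-\gamma<\tfrac{1}{2}$ the chain closes. Your route is more self-contained --- it needs no external digamma estimate and no auxiliary squeeze function --- and your integration-by-parts step actually supplies the sign analysis of the integral that the paper disposes of ``by observation.'' What you give up is only sharpness near the endpoints: the paper's squeeze tracks the limits of both regularized functions at $0$ and $1$ (which is what motivates its concluding conjecture), whereas your constant gap $\tfrac{1}{2}-(1-\gamma)=\gamma-\tfrac{1}{2}$ says nothing about how the difference behaves as $x\to 0^{+}$ or $x\to 1^{-}$.
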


\section{Remarks}

Briefly, we give some remarks about the problem and method of solution. The appearance in Theorem \ref{thm:cotVsZeta&Digamma} of the trigonometric function $ \cot $ is not entirely surprising since, e.g., using Riemann's functional equation of $ \zeta(s) $, namely
    \begin{eqnarray}
       \zeta(1-s) = \frac{2\,\Gamma(s)\cos(\pi s/2)}{(2\pi)^{s}} \zeta(s) ,
    \end{eqnarray}
or equivalently 
    \begin{eqnarray*}
        \zeta(s) = \frac{2\,\Gamma(1-s)\sin(\pi s/2)}{(2\pi)^{1-s}}\zeta(1-s),
    \end{eqnarray*}
(as found in \cite{apostol1976I}, p. 259), we can write, formally,
    \begin{eqnarray}\label{eqn:cotInTermsOfZetaGamma}
        {\pi} \cot {\pi s}
        = \left(\frac{(2\pi)^{2s}\,\zeta(1-2s)}{\zeta(2s)}\right)^2\frac{\Gamma(1-2s)}{2\,\Gamma(2s)}.
    \end{eqnarray}
What is more interesting is Theorem \ref{thm:cotVsZeta&Digamma} relates a simple additive function of $ \zeta(x) $ and $\psi(x) $ with a ratio of $\zeta(x)$ and $ \Gamma(x) $ as in \eqref{eqn:cotInTermsOfZetaGamma} (with the domain restricted appropriately). 

We have written Theorem \ref{thm:cotVsZeta&Digamma} in terms of $ \cot $ because this function satisfies the -- rather strong -- replicative property (probably first defined as this in \cite{knuth1968}, see also \cite{knuth1981}). Sometimes a function having this so-called replicative property is called a Kubert function (due to \cite{kubert1979}). We alter the naming slightly in a definition.  
    \begin{defin}\label{defin:kubertKnuth}
        We call a function $ f(x) $ a \emph{Kubert-Knuth} function of weight $ \omega $ over $ (0,1) $ if it satisfies the infinite functional equation 
            \begin{eqnarray}\label{eq:kubertKnuthFunctionalEq}
                \frac{1}{p^{\omega}}\sum_{k=0}^{p-1}f\left(\frac{x + k}{p}\right) = f(x)
            \end{eqnarray}
        for all $ p \in 
        \mathbb{N} $ and some fixed $ \omega \in \mathbb{C} $ dependent on $ f $.
    \end{defin}

Kubert-Knuth functions have the following series interpretation that helps clarify the significance of the functional equation. Suppose that $ f(x) = \sum_{k=0}^{\infty} a_nq^n $ is the Fourier expansion of $ f $ (i.e. $ q := e^{2 \pi i x} $) on the interval $ (0,1) $. Then, if $ f $ is a Kubert-Knuth function, it follows that, for arbitrary $p \in \mathbb{N} $, relation
    \begin{eqnarray}\label{eq:KubertSeries}
        \sum_{n=1}^{\infty} a_n q^n  = \sum_{n=0}^{\infty}a_{pn}q^{n}    
    \end{eqnarray}
holds. We can illustrate this with an example. The function $ \cot \pi x $ up to complex multiples and an additive constant is perhaps the archetypal Kubert-Knuth function, and also its Fourier expansion is well-known. Using the geometric series
    \begin{eqnarray}
        \frac{1}{1-x} := \sum_{n=0}^{\infty}x^n
    \end{eqnarray}
valid for $ |x|<1 $, then, clearly, if $ a_n = 1 $, identity
    \begin{eqnarray*}
        \frac{1}{1-x} = \sum_{n=0}^{\infty}a_n x^n = \sum_{n=0}^{\infty}a_{pn}x^n
    \end{eqnarray*}
holds for an arbitrary $ p \in \mathbb{N} $. Substitution $ x \mapsto q = e^{2\pi i x } $ renders
    \begin{eqnarray*}
        \frac{1}{1-q} = \frac{1}{2}\left(1 + i \cot \pi x\right)
    \end{eqnarray*}
true. After a normalization, we get the known Fourier expansion 
    \begin{eqnarray*}
        \frac{1 + q}{1-q} = i\cot \pi x = 1+2\sum_{n=1}^{\infty}q^{n}.
    \end{eqnarray*}
This also suggests the close relationship between Kubert-Knuth functions and polylogarithms of order $s$ or those functions defined (for our purposes)
    \begin{equation*}
        \emph{\li}_s(z) := \sum_{n=1}^{\infty} \frac{z^n}{n^s},
    \end{equation*}
where $ 1 \geq z \in \mathbb{C} $ and $s \in \mathbb{C}$ is arbitrary. Observe that $ 1/(1-x) = \emph{\li}_0(x)/x $ holds, etc. See also the related studies \cite{LagariasLi2016i} and \cite{LagariasLi2016ii}. 
    \begin{rmk}
         The functions we have defined as Kubert-Knuth functions are typically called replicative as in \cite{kairies1997}, \cite{kairies2000}, \cite{knuth1968}, and \cite{knuth1981}, or {Kubert} functions as in \cite{kanemitsuYoshimoto1996} and \cite{milnor1983}; however, sometimes the expression of the Kubert-Knuth property at \eqref{eq:kubertKnuthFunctionalEq} is solely called a multiplication formula e.g. see \cite{srivChoi2012}, p. 60. Differently still, discussion of this mutliplication formula as a functional equation is found in \cite{kuczmaEtAl1990} (also see \cite{hilberdink2001}). Thus, papers about Kubert-Knuth functions are scattered under various terminologies, and articles often overlap or give incomplete bibliographies. These functions are very interesting, appear in many guises, and deserve an encyclopedic analysis yet to be written.   
    \end{rmk}

Above, we claimed the Kubert-Knuth property  is strong. This is due to the fact that it characterizes $ \cot \pi x $ on the interval $ (0,1) $ when $ \omega = 1 $ in Definition \ref{defin:kubertKnuth} (see e.g. \cite{jager1985} and \cite{milnor1983}). This is true also of other Kubert-Knuth functions such as the restricted Bernoulli polynomials (see \cite{carlitz1953}), etc. This completes our discussion of Kubert-Knuth functions.

\section{Proof}

We next give some remarks to reduce what must be shown. Then well-known identity $$ \psi(1-x)-\psi(x) = \pi \cot \pi x $$ is classical (or can be found in \cite{srivChoi2012}, p. 14), and so the inequality of Theorem \ref{thm:cotVsZeta&Digamma} is equivalent to showing, for $ 0 < x < 1 $, relation
    \begin{eqnarray}\label{eqn:CotReduction}
        \psi(1-x) < \zeta(x)
    \end{eqnarray}
holds. Our method of proving \eqref{eqn:CotReduction}, then, is to find some $ f $, where a ``squeezing'' argument amounts to showing
    \begin{eqnarray}\label{eqn:squeeze}
        \psi(1-x) + \frac{1}{1-x}< f(x) \leq \zeta(x)+\frac{1}{1-x}
    \end{eqnarray}
holds. This path was suggested by the following beautiful 
    \begin{thm}[Elezovi\'{c}, Giordano, and Pe\v{c}ari\'{c}]\label{thm:digammaBound}
        Let $\gamma = - \psi(1) $. Whenever $ 0 < x < \infty $, the inequality
            \begin{equation*}
                \log\left(x+\frac{1}{2}\right)-\frac{1}{x} \leq \psi(x) \leq \log\left(x+\frac{1}{e^{\gamma}}\right)-\frac{1}{x}.
            \end{equation*}
        holds.
    \end{thm}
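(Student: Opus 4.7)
My first step would be to use the recurrence $\psi(x+1) - \psi(x) = 1/x$ to rewrite the claim in the cleaner form
$$\log(x + 1/2) \le \psi(x+1) \le \log(x + e^{-\gamma}), \qquad x > 0,$$
eliminating the singular $-1/x$ terms and reducing the problem to comparing $\psi(x+1)$ (which satisfies the asymptotic $\psi(x+1) = \log x + 1/(2x) + O(1/x^2)$ as $x \to \infty$) against two logarithmic comparands.

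For the lower bound, let $u(x) := \log(x+1/2) - \psi(x+1)$. By the asymptotic, $u(\infty) = 0$. Using the classical integral representations $1/(x+c) = \int_0^\infty e^{-ct} e^{-xt}\,dt$ and $\psi'(x+1) = \int_0^\infty (t/(e^t-1))\,e^{-xt}\,dt$, I would compute
$$u'(x) = \int_0^\infty \Bigl(e^{-t/2} - \frac{t}{e^t-1}\Bigr)\,e^{-xt}\,dt.$$
The integrand is non-negative because $2\sinh(t/2) \ge t$ for $t \ge 0$, so $u$ is non-decreasing; combined with $u(\infty) = 0$, this gives $u \le 0$.

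For the upper bound, let $v(x) := \log(x + e^{-\gamma}) - \psi(x+1)$. The point of the constant $e^{-\gamma}$ is precisely to force $v(0) = -\gamma - \psi(1) = 0$, and one also has $v(\infty) = 0$, so no naive monotonicity argument can succeed. Instead I would analyze
$$v'(x) = \int_0^\infty \Bigl(e^{-e^{-\gamma} t} - \frac{t}{e^t-1}\Bigr)\,e^{-xt}\,dt.$$
A direct expansion shows the integrand is negative near $t = 0$ (since $e^{-\gamma} > 1/2$) and positive for large $t$ (since $e^{-e^{-\gamma} t}$ decays more slowly than $t e^{-t}$); studying the auxiliary function $\Phi(t) := e^{(1 - e^{-\gamma}) t} - e^{-e^{-\gamma} t} - t$, whose sign matches the integrand's, via its second derivative shows that $\Phi$ changes sign exactly once on $(0,\infty)$. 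By Schoenberg's variation-diminishing theorem for the Laplace transform, $v'$ therefore has at most one sign change on $(0,\infty)$. Since $v'(0) = e^\gamma - \pi^2/6 > 0$ while $v'(x) \sim (1/2 - e^{-\gamma})/x^2 < 0$ as $x \to \infty$, this sign change is realized exactly once, from positive to negative. Hence $v$ rises from $v(0) = 0$ to a unique maximum and then descends back to $v(\infty) = 0$, giving $v \ge 0$ throughout $(0,\infty)$.

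The main obstacle is the upper bound: the lower bound yields to a clean monotonicity-plus-asymptotics argument, whereas the upper bound requires both (a) a delicate sign-change count for an auxiliary exponential polynomial and (b) the variation-diminishing property of the Laplace transform to pass that count to $v'$ itself. A possibly cleaner alternative would be to package both inequalities into the single claim that $F(x) := e^{\psi(x+1)} - x$ is strictly decreasing on $(0,\infty)$, since one checks directly that $F(0) = e^{-\gamma}$ and $F(x) \to 1/2$; the difficulty then migrates to proving $e^{\psi(x+1)} \psi'(x+1) \le 1$, which is the same phenomenon repackaged.
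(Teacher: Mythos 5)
Your argument is sound, but be aware that the paper does not actually prove this theorem: its ``proof'' consists of the single line ``See \cite{ElezovichEtAl200}'', deferring entirely to Elezovi\'{c}, Giordano and Pe\v{c}ari\'{c}. The argument in that source is essentially the ``cleaner alternative'' you mention at the end --- one shows $x\mapsto e^{\psi(x+1)}-x$ is strictly decreasing (indeed convex) with value $e^{-\gamma}$ at $x=0$ and limit $\tfrac12$ at infinity, which yields both bounds simultaneously. Your route is genuinely different and checks out. The reduction to $\log(x+\tfrac12)\le\psi(x+1)\le\log(x+e^{-\gamma})$ is correct; the lower bound via $u'(x)=\int_0^\infty\bigl(e^{-t/2}-t/(e^t-1)\bigr)e^{-xt}\,dt$ and $2\sinh(t/2)\ge t$ is clean and complete. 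For the upper bound you rightly observe that $v(0)=v(\infty)=0$ kills any monotonicity argument, and the sign-change count goes through: $\Phi(0)=\Phi'(0)=0$, $\Phi''$ vanishes only at $t^{\ast}=2\log\bigl(e^{-\gamma}/(1-e^{-\gamma})\bigr)>0$ and passes there from negative to positive (this uses $e^{-\gamma}>\tfrac12$), so $\Phi$ --- and hence the integrand of $v'$ --- changes sign exactly once, from $-$ to $+$; the variation-diminishing property of the totally positive kernel $e^{-xt}$ then limits $v'$ to at most one sign change, and the endpoint signs $v'(0)=e^{\gamma}-\pi^2/6>0$ and $v'(x)\sim(1/2-e^{-\gamma})/x^2<0$ force the unimodal profile of $v$, whence $v\ge0$. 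What your approach buys is a self-contained, verifiable mechanism for the sharper upper bound; what it costs is the appeal to Schoenberg's theorem, which is considerably heavier machinery than the monotonicity of $e^{\psi(x+1)}-x$ used in the cited source. If you want to avoid total positivity altogether, your closing remark points to the standard proof.
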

    \begin{proof}
        See \cite{ElezovichEtAl200}.
    \end{proof}
From Theorem \ref{thm:digammaBound}, the following comes out effortlessly as a special case:
    \begin{cor}\label{cor:auxF}
        If $ 0 < x <  1$ holds, then
            \begin{equation*}
                \psi(1-x)  + \frac{1}{1-x} \leq \log \left(1-x + \frac{1}{e^\gamma}\right)
            \end{equation*}
        holds. 
    \end{cor}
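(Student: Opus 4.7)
The plan is to derive the corollary as a direct substitution into the upper bound half of Theorem \ref{thm:digammaBound}. Since the hypothesis $0 < x < 1$ forces $0 < 1-x < 1 < \infty$, the argument $1-x$ lies inside the domain $(0,\infty)$ on which Theorem \ref{thm:digammaBound} is valid, so the substitution $x \mapsto 1-x$ is legitimate.

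Carrying out that substitution in the right-hand inequality of Theorem \ref{thm:digammaBound} gives
\begin{eqnarray*}
\psi(1-x) \leq \log\left(1-x + \frac{1}{e^\gamma}\right) - \frac{1}{1-x},
\end{eqnarray*}
and transposing the term $1/(1-x)$ to the left-hand side yields exactly the asserted inequality. No other manipulation is needed, which is why the corollary is described in the excerpt as coming out effortlessly.

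The only step that requires even a moment's attention is the verification that the domain constraint of Theorem \ref{thm:digammaBound} is preserved after the substitution, and this is immediate from $0 < x < 1$. There is no genuine obstacle, so I would present the argument in a single short display after remarking on the domain check, and not dwell on it further before turning to the main squeezing inequality \eqref{eqn:squeeze}, where the substantive work of choosing and analyzing the auxiliary function $f$ takes place.
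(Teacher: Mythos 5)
Your proof is correct and is exactly the argument the paper intends: the corollary is obtained by substituting $x \mapsto 1-x$ into the upper bound of Theorem \ref{thm:digammaBound} (valid since $0 < 1-x < 1$) and transposing the term $1/(1-x)$. The paper gives no more detail than "comes out effortlessly as a special case," so your write-up matches its approach.
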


What we take from the Corollary is the following
    \begin{lem}\label{lem:decreasing}
        On the interval $ 0 <x < 1$, the function
            \begin{eqnarray}\label{eqn:lineBelowZeta}
                \log \left(1-x + \frac{1}{e^\gamma} \right)
            \end{eqnarray}
        is monotone decreasing. 
    \end{lem}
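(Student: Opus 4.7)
The plan is to verify the claim directly by a one-line derivative computation; no auxiliary lemmas or cleverness are required. First I would observe that the argument of the logarithm, namely $1 - x + e^{-\gamma}$, is strictly positive on the interval $(0,1)$: indeed $1-x > 0$ for $x < 1$, and $e^{-\gamma} > 0$, so the sum is positive. In particular, the logarithm is well-defined and differentiable on $(0,1)$.

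Next I would differentiate using the chain rule to obtain
\begin{equation*}
    \frac{d}{dx}\log\!\left(1 - x + e^{-\gamma}\right) = \frac{-1}{1 - x + e^{-\gamma}}.
\end{equation*}
By the positivity observed above, this derivative is strictly negative throughout $(0,1)$, which establishes the strict monotone decrease of the function in question.

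The main ``obstacle'' here is really nonexistent: one just needs to note that the constant $e^{-\gamma} > 0$ ensures the argument of the logarithm stays positive (and in fact bounded away from zero on every compact subinterval, though that is not needed for the present claim). The argument does not use any special properties of $\gamma$ beyond $\gamma \in \mathbb{R}$, so in fact the conclusion would hold with $e^{-\gamma}$ replaced by any nonnegative constant; the specific value $e^{-\gamma}$ becomes relevant only when this lemma is combined with Corollary \ref{cor:auxF} in the subsequent squeezing argument \eqref{eqn:squeeze}.
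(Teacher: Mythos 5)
Your proof is correct and follows essentially the same route as the paper: differentiate, obtain $-1/(1-x+e^{-\gamma})$, and note the denominator is positive on $(0,1)$. The only difference is that you make the positivity of the logarithm's argument explicit, which the paper leaves as "immediate by observation."
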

    \begin{proof}
        We do this by showing the derivative is negative over the unit interval. We find
            \begin{eqnarray*}
                \frac{d}{dx}\log(1-x + e^{-\gamma}) = - \frac{1}{1-x + e^{-\gamma}}
            \end{eqnarray*}
        holds and the conclusion is immediate by observation.
    \end{proof}
Having established Lemma \ref{lem:decreasing}, then the function $ f(x) $ from \eqref{eqn:squeeze} we take as
    \begin{eqnarray*}
        f(x) := b x + \frac{1}{2}
    \end{eqnarray*}
where $ b = B_1(\gamma) = \gamma-\frac{1}{2} $, $B_1(x) $ the first Bernoulli polynomial. This choice of $ f(x) $ is evidently monotone increasing on the unit interval, contrary to $ \log(1-x + e^{-\gamma}) $ by Lemma \ref{lem:decreasing}. Thus, considering that 
    \begin{eqnarray*}
        \lim _{x\rightarrow 0} \log \left(1-x + \frac{1}{e^\gamma}  \right)= \log(1+e^{-\gamma}) = 0.44... 
    \end{eqnarray*}
holds and 
    \begin{eqnarray}\label{eqn:linearSqueezeFunction}
        \lim_{x \rightarrow \alpha}  f(x) = 
            \begin{cases}
                \frac{1}{2} \quad \text{if } \alpha = 0 \\
                \gamma \quad \text{if } \alpha = 1
            \end{cases}
    \end{eqnarray}
holds, the relation $ \log(1-x + e^{-\gamma})\leq f(x) $ holds, as needed.

By the discussion above, we have made an easy reduction of the proof of Theorem \ref{thm:cotVsZeta&Digamma} to showing, if $ 0 < x < 1 $, then the relation 
    \begin{eqnarray*}
        f(x) \leq \zeta(x) + \frac{1}{1-x}
    \end{eqnarray*}
is satisfied. So, this is the contents of the next  
    \begin{proof}
        Firstly, we show that on interval $ (0,1) $ the function $ \zeta(x) + 1/(1-x) $ is strictly increasing. For this, recall the following classical formula: when $ \Re(s)> -1 $ holds, the relation
            \begin{eqnarray*}
                \zeta(s) + \frac{1}{1-s} = \frac{1}{2} - s \int^{\infty}_{1}\frac{\{t\}-\frac{1}{2}}{t^{s+1}} dt
            \end{eqnarray*}
        is true (see \cite{srivChoi2012}, p. 144). For  $ x $ satisfying $ 0 < x < 1 $, we define 
            \begin{eqnarray*}
                I(x) := \int^{\infty}_{1}\frac{\{t\}-\frac{1}{2}}{t^{x+1}} dt.
            \end{eqnarray*}
        Alternatively, if
            \begin{eqnarray*}
                I_n(x) := \int_n^{n+1}\frac{t-n - \frac{1}{2}}{t^{x+1}}dt,
            \end{eqnarray*}
        then
            \begin{eqnarray*}
                I(x) := \sum_{n=1}^\infty I_n(x).
            \end{eqnarray*}
        From the mere definition of $ I(x)$, it follows by observation that if $ 0 < x_0 < x_1 < 1 $, then $ - \infty < I(x_1) < I(x_0) < 0 \Longleftrightarrow \infty >  -I(x_1) > - I(x_0) > 0 $ holds. Therefore, we find
            \begin{eqnarray*}
                \zeta(x) + \frac{1}{1-x} = x|I(x)| + \frac{1}{2} 
            \end{eqnarray*}
        is as we wish i.e. strictly increasing. At the endpoints, consider the classical (and easily established) fact that
            \begin{eqnarray*}
                \lim_{x \rightarrow \alpha} \left( \zeta(x) + \frac{1}{1-x}\right) = 
                    \begin{cases}
                        \frac{1}{2} \quad \text{if } \alpha = 0 \\
                        \gamma \quad \text{if } \alpha = 1
                    \end{cases}
            \end{eqnarray*}
        holds (see e.g. \cite{srivChoi2012}, pp. 91-92). Thus, we can conclude from \eqref{eqn:linearSqueezeFunction} that
            \begin{eqnarray*}
                f(x) = b x + \frac{1}{2} \leq \zeta(x) + \frac{1}{1-x} 
            \end{eqnarray*}
        is true. This completes the proof.
    \end{proof}
    \begin{rmk}
        The fact that for $ x \in \{0<x<1: x \in \mathbb{R}\} $,
            \begin{eqnarray*}
                \zeta(x) + \frac{1}{1-x}
            \end{eqnarray*}
        is strictly increasing was proven in \cite{hilberdink2023} using a series truncation. 
    \end{rmk}

\section{Conclusion}

In the course of studying the above relation, we found numerous additional problems. The one that is most closely related to our Theorem \ref{thm:cotVsZeta&Digamma} we give to the reader as
    \begin{conj}
         In this conjecture we define $ \overline{b} := \overline{B_1(\gamma)} = \gamma + \frac{1}{2}$ and $ b = B_1(\gamma) := \gamma - \frac{1}{2} $ (again, $ B_1(x) $ is the first Bernoulli polynomial and $ \gamma = -\psi(1) $ is the Euler-Mascheroni constant). Then if $ 0 < x < 1 $, we conjecture that the relation 
            \begin{equation*}
                \pi \cot \pi x + x < \zeta(x) - \psi(x) < \pi \cot \pi x + \overline{b}x + b
            \end{equation*}  
        holds.
    \end{conj}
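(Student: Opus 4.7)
The plan begins by invoking the reflection identity $\psi(1-x) - \psi(x) = \pi\cot\pi x$ (already used in the proof of Theorem \ref{thm:cotVsZeta&Digamma}) to rewrite the conjecture as
\begin{equation*}
   x < g(x) < \overline{b}\, x + b \qquad (0 < x < 1), \qquad g(x) := \zeta(x) - \psi(1-x).
\end{equation*}
From the Laurent expansions $\zeta(s) = 1/(s-1) + \gamma + O(s-1)$ and $\psi(z) = -1/z - \gamma + O(z)$ near their poles, together with $\zeta(0) = -\tfrac{1}{2}$ and $\psi(1) = -\gamma$, one computes the boundary values $g(0^+) = \gamma - \tfrac{1}{2} = b$ and $g(1^-) = 2\gamma = \overline{b}+b$. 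In particular, the conjectured upper bound $\overline{b}\,x+b$ is \emph{precisely} the chord of $g$ joining its boundary values, while the lower bound $x$ lies strictly below $g$ at each endpoint.

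For the upper bound, this boundary coincidence reduces the inequality to strict convexity of $g$ on $(0,1)$. I would work with the decomposition
\begin{equation*}
   g(x) \;=\; \phi(x) - \psi(2-x), \qquad \phi(x) := \zeta(x) + \frac{1}{1-x},
\end{equation*}
obtained from the recurrence $\psi(2-x) = \psi(1-x) + 1/(1-x)$. The Weierstrass expansion
\begin{equation*}
   \psi(2-x) = -\gamma + \sum_{n=1}^{\infty}\left(\frac{1}{n} - \frac{1}{n+1-x}\right)
\end{equation*}
is termwise concave in $x$, so $-\psi(2-x)$ is strictly convex with second derivative $2\sum_{n\geq 1}(n+1-x)^{-3} > 0$. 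For $\phi$, I would use the representation $\phi(x) = \tfrac{1}{2} - xI(x)$ with $I(x) = \int_1^{\infty}(\{t\}-\tfrac{1}{2})t^{-x-1}\,dt$ from the proof of Theorem \ref{thm:cotVsZeta&Digamma}; integrating by parts against the antiderivative $P_2(t) := (\{t\}^2 - \{t\})/2 \in [-1/8,0]$ of the sawtooth $\{t\} - \tfrac{1}{2}$ converts each derivative of $I$ into an integral with a sign-definite kernel, from which the inequality $\phi''(x) > 0$ on $(0,1)$ should follow.

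The lower bound $g(x) > x$ is considerably more delicate. Numerically, $\zeta(\tfrac{1}{2}) \approx -1.460$ and $\psi(\tfrac{1}{2}) = -\gamma - 2\log 2$ give $g(\tfrac{1}{2}) - \tfrac{1}{2} \approx 3\cdot 10^{-3}$, and since $g'(\tfrac{1}{2}) = \zeta'(\tfrac{1}{2}) + \psi'(\tfrac{1}{2})$ with $\psi'(\tfrac{1}{2}) = \pi^{2}/2$ is only marginally greater than $1$, the minimizer $x^{*}$ of $h(x) := g(x) - x$ lies just below $\tfrac{1}{2}$ with $h(x^{*})$ nearly vanishing. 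This rules out the technique of Theorem \ref{thm:cotVsZeta&Digamma}: combining Corollary \ref{cor:auxF} with the lower bound $f(x) = bx + \tfrac{1}{2}$ only delivers $h(x) \geq (\gamma - 3/2)x + \tfrac{1}{2} - \log(1-x+e^{-\gamma})$, which in fact turns negative for $x$ near $\tfrac{1}{2}$. I would therefore exploit the (established) convexity of $h$, together with the strictly positive endpoint values $h(0^+) = \gamma - \tfrac{1}{2}$ and $h(1^-) = 2\gamma - 1$, and pin down $h(x^{*}) > 0$ by expanding both $\zeta(x) + 1/(1-x)$ and $\psi(2-x)$ in Taylor series about $x = \tfrac{1}{2}$ using the Stieltjes constants $\gamma_n$ and the closed-form polygamma values $\psi^{(k)}(\tfrac{1}{2})$. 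The principal obstacle is precisely this step: producing a rigorous bound sharp enough to detect a gap of order $10^{-3}$ is likely to require either a new closed-form identity for $g$ or a carefully truncated expansion with explicit remainder control.
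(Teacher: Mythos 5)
First, note that the paper itself offers no proof of this statement --- it is explicitly left as an open conjecture --- so your proposal can only be judged on its own merits, and as it stands it contains one false sub-claim and one unexecuted step. The setup is good: the reduction to $x < g(x) < \overline{b}x + b$ with $g(x) = \zeta(x) - \psi(1-x)$, the endpoint values $g(0^+) = \gamma - \tfrac{1}{2} = b$ and $g(1^-) = 2\gamma = \overline{b}+b$, and the observation that the conjectured upper bound is exactly the chord of $g$, are all correct, and reducing the upper inequality to strict convexity of $g$ is a sensible route. However, your assertion that $\phi(x) = \zeta(x) + 1/(1-x)$ satisfies $\phi''(x)>0$ on $(0,1)$ is false. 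Since $\phi(x) = \sum_{n\ge 0}\tfrac{(-1)^n}{n!}\gamma_n (x-1)^n$ (Stieltjes expansion), one has $\phi''(1) = \gamma_2 \approx -0.0097 < 0$, and in fact $\phi'' < 0$ throughout $(0,1)$: for instance $\phi(\tfrac{1}{2}) = \zeta(\tfrac{1}{2})+2 \approx 0.53965$ lies \emph{above} the chord value $\tfrac{1}{2}(\tfrac{1}{2}+\gamma) \approx 0.53861$. The convexity of $g$ survives only because $\tfrac{d^2}{dx^2}\bigl(-\psi(2-x)\bigr) = 2\zeta(3,2-x) \ge 2(\zeta(3)-1) \approx 0.404$ overwhelms the (small) concavity of $\phi$; to repair the argument you must prove a quantitative dominance bound such as $|\phi''(x)| < 2(\zeta(3)-1)$ on $(0,1)$, and the integration by parts against $P_2$ will not deliver this as stated, because the kernels entering $\phi'' = -2I' - xI''$ occur with opposite signs and no single sign-definite representation results.

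For the lower bound you have correctly diagnosed the difficulty but not overcome it. Convexity of $h(x) = g(x) - x$ together with $h(0^+)>0$ and $h(1^-) > 0$ proves nothing about the interior minimum (a convex function positive at both endpoints can perfectly well be negative inside), so everything rests on the unproven estimate $h(x^{*}) > 0$ with a margin of only about $3\times 10^{-3}$ near $x = \tfrac{1}{2}$. Your plan --- Taylor expansion at $\tfrac{1}{2}$ in the Stieltjes constants and polygamma values with explicit remainder control --- is plausible but is precisely the part you have not carried out, and without it the conjectured lower inequality remains unestablished. In short: the upper bound needs the false convexity claim for $\phi$ replaced by a dominance estimate against $2\zeta(3,2-x)$, and the lower bound is still open in your write-up, just as it is in the paper.
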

Using the series interpretation of a Kubert-Knuth function, if the conjecture is true, it suggests bounds on the coefficients of the Fourier expansion of  
    \begin{eqnarray*}
        \zeta(x) - \psi(x),
    \end{eqnarray*}
periodicity being achieved by the simple trick of considering only the fractional part of a real number. This conjecture can be related to an elementary criterion of the Riemann hypothesis. The novelty would be that, assuming the conjecture is true, the Riemann hypothesis could be ``reduced'' to a question about the behavior of $ \zeta(x) $ over $ 0 < x < 1 $. Because this is quite technical to state and we do not expect this to be any easier, we leave the details to the reader (related analyses in full detail are given in \cite{kanemitsuYoshimoto1996}, \cite{yoshimoto2004}).

\printbibliography 

@article {alkan2019,
    AUTHOR = {Alkan, E.},
     TITLE = {Resolution of a conjecture on the convexity of zeta functions},
   JOURNAL = {J. Math. Anal. Appl.},
  FJOURNAL = {Journal of Mathematical Analysis and Applications},
    VOLUME = {472},
      YEAR = {2019},
    NUMBER = {2},
     PAGES = {1987--2016}
}

@article {alzer2005,
    AUTHOR = {Alzer, Horst},
     TITLE = {Remark on a double-inequality for the {R}iemann zeta function},
   JOURNAL = {Expo. Math.},
  FJOURNAL = {Expositiones Mathematicae},
    VOLUME = {23},
      YEAR = {2005},
    NUMBER = {4},
     PAGES = {349--352}
}

@article {alzerKwong2021,
    AUTHOR = {Alzer, H. and Kwong, M. K.},
     TITLE = {Some inequalities for the {R}iemann zeta function},
   JOURNAL = {Rend. Istit. Mat. Univ. Trieste},
  FJOURNAL = {Rendiconti dell'Istituto di Matematica dell'Universit\`a{} di
              Trieste. An International Journal of Mathematics},
    VOLUME = {53},
      YEAR = {2021},
     PAGES = {Art. No. 5, 11}
}

@article {alzerKwong2025,
    AUTHOR = {Alzer, H. and Kwong, M. K.},
     TITLE = {Mean value inequalities for the {R}iemann zeta function},
   JOURNAL = {Integral Transforms Spec. Funct.},
  FJOURNAL = {Integral Transforms and Special Functions. An International
              Journal},
    VOLUME = {36},
      YEAR = {2025},
    NUMBER = {6},
     PAGES = {449--459}
}

@book{apostol1976I,
    title = {Introduction to Analytic Number Theory},
    author = {T. M. Apostol},
    series = {},
    year = {1976},
    publisher = {Springer Science+Business Media, Inc},
    keywords = {}
}

@article{carlitz1953,
    author = "L. Carlitz",
    title = "{A note on the multiplication formulas for the Bernoulli and Euler polynomials}",
    journal = "Proc. Amer. Math. Soc.",
    volume = "4",
    number = "",
    pages = "184-188",
    year = "1953"
}

@article {cerone2007,
    AUTHOR = {Cerone, P.},
     TITLE = {Special functions: approximations and bounds},
   JOURNAL = {Appl. Anal. Discrete Math.},
  FJOURNAL = {Applicable Analysis and Discrete Mathematics},
    VOLUME = {1},
      YEAR = {2007},
    NUMBER = {1},
     PAGES = {72--91}
}

@article {ceroneEtAll2004,
    AUTHOR = {Cerone, P. and Aslam Chaudhry, M. and Korvin, G. and Qadir,
              Asghar},
     TITLE = {New inequalities involving the zeta function},
   JOURNAL = {JIPAM. J. Inequal. Pure Appl. Math.},
  FJOURNAL = {JIPAM. Journal of Inequalities in Pure and Applied
              Mathematics},
    VOLUME = {5},
      YEAR = {2004},
    NUMBER = {2},
     PAGES = {Article 43, 17},
      ISSN = {1443-5756},
   MRCLASS = {11M06 (26D07)},
  MRNUMBER = {2085687},
MRREVIEWER = {Antanas\ Laurin\v cikas},
}

@article {delange1987,
    AUTHOR = {Delange, H.},
     TITLE = {Une remarque sur la d\'eriv\'ee logarithmique de la fonction
              z\^eta de {R}iemann},
   JOURNAL = {Colloq. Math.},
  FJOURNAL = {Colloquium Mathematicum},
    VOLUME = {53},
      YEAR = {1987},
    NUMBER = {2},
     PAGES = {333--335}
}

@article{ElezovichEtAl200,
    author = {Elezovi{\'c}, N. and Giordano, C. and Pe{\v{c}}ari{\'c}, J.},
    title = {The best bounds in {Gautschi}'s inequality},
    fjournal = {Mathematical Inequalities \& Applications},
    journal = {Math. Inequal. Appl.},
    issn = {1331-4343},
    volume = {3},
    number = {2},
    pages = {239--252},
    year = {2000}
}

@article{hilberdink2001,
    author = "T. Hilberdink",
    title = "{A functional equation for the cotangent on the open unit interval}",
    journal = "Aequationes Math.",
    volume = "61",
    number = "",
    pages = "179-189",
    year = "2001"
}

@article {hilberdink2023,
    AUTHOR = {Hilberdink, T.},
     TITLE = {Inequalities for the {R}iemann zeta function on the positive
              reals},
   JOURNAL = {Math. Inequal. Appl.},
  FJOURNAL = {Mathematical Inequalities \& Applications},
    VOLUME = {26},
      YEAR = {2023},
    NUMBER = {4},
     PAGES = {995--1002}
}

@article{jager1985,
 author = {J{\"a}ger, R.},
 title = {Charakterisierung des {Cotangens} mit {Replikativit{\"a}t}},
 fjournal = {Manuscripta Mathematica},
 journal = {Manuscr. Math.},
 issn = {0025-2611},
 volume = {56},
 pages = {167--175},
 year = {1985}
}

@article{kairies1997,
    author = "H-H. Kairies",
    title = "{Functional equations for peculiar functions}",
    journal = "Aequ. Math.",
    volume = "53",
    number = "",
    pages = "207-241",
    year = "1997"
}

@incollection {kairies2000,
    AUTHOR = {Kairies, H-H.},
     TITLE = {Replicativity and function spaces},
 BOOKTITLE = {Functional equations and inequalities},
    SERIES = {Math. Appl.},
    VOLUME = {518},
     PAGES = {125--138},
 PUBLISHER = {Kluwer Acad. Publ., Dordrecht},
      YEAR = {2000}
}

@article {kanemitsuYoshimoto1996,
    AUTHOR = {Kanemitsu, S. and Yoshimoto, M.},
     TITLE = {Farey series and the {R}iemann hypothesis},
   JOURNAL = {Acta Arith.},
  FJOURNAL = {Acta Arithmetica},
    VOLUME = {75},
      YEAR = {1996},
    NUMBER = {4},
     PAGES = {351--374}
}

@misc{knuth1968,
 author = {Knuth, D. E.},
 title = {The art of computer programming. {Vol}. 1: {Fundamental} algorithms},
 year = {1968}
}

@misc{knuth1981,
 author = {Knuth, D. E.},
 title = {The art of computer programming. {Vol}. 2: {Seminumerical} algorithms. 2nd ed.},
 year = {1981}
}

@article{kubert1979,
 author = {Kubert, D. S.},
 title = {The universal ordinary distribution},
 fjournal = {Bulletin de la Soci{\'e}t{\'e} Math{\'e}matique de France},
 journal = {Bull. Soc. Math. Fr.},
 issn = {0037-9484},
 volume = {107},
 pages = {179--202},
 year = {1979}
}

@book{kuczmaEtAl1990,
    title = {Iterative Functional Equations},
    author = {M. Kuczma and B. Choczewski and R. Ger},
    series = {},
    year = {1990},
    publisher = {Cambridge University Press},
    address = {New York}
}

@article {LagariasLi2016i,
    AUTHOR = {Lagarias, J. C. and Li, W-C. W.},
     TITLE = {The {L}erch zeta function {III}. {P}olylogarithms and special
              values},
   JOURNAL = {Res. Math. Sci.},
  FJOURNAL = {Research in the Mathematical Sciences},
    VOLUME = {3},
      YEAR = {2016},
     PAGES = {Paper No. 2, 54}
}

@article {LagariasLi2016ii,
    AUTHOR = {Lagarias, J. C. and Li, W-C. W.},
     TITLE = {The {L}erch zeta function {IV}. {H}ecke operators},
   JOURNAL = {Res. Math. Sci.},
  FJOURNAL = {Research in the Mathematical Sciences},
    VOLUME = {3},
      YEAR = {2016},
     PAGES = {Paper No. 33, 39}
}

@article{milnor1983,
    author = "J. Milnor",
    title = "{ On polylogarithms, Hurwitz zeta functions, and the Kubert identities}",
    journal = "Enseign. Math. (2)",
    volume = "29",
    number = "3-4",
    pages = "281–322",
    year = "1983"
}

@book{srivChoi2012,
    title = {Zeta and $q$-Zeta Functions and Associated Series and Integrals},
    author = {H.M. Srivastava and J. Choi},
    series = {},
    year = {2012},
    publisher = {Elsevier Inc},
    keywords = {}
}

@article {yoshimoto2004,
    AUTHOR = {Yoshimoto, M.},
     TITLE = {Abelian theorems, {F}arey series and the {R}iemann hypothesis},
   JOURNAL = {Ramanujan J.},
  FJOURNAL = {Ramanujan Journal. An International Journal Devoted to the
              Areas of Mathematics Influenced by Ramanujan},
    VOLUME = {8},
      YEAR = {2004}
}

\end{document}